\documentclass[11pt]{amsart}
 \usepackage{graphicx}
 \usepackage{amssymb}
 \usepackage{epstopdf}
 \usepackage{color}

 \newtheorem{lemma}{Lemma}[section]

 \newtheorem{theorem}[lemma]{Theorem}

 \def\R{{\relax\ifmmode I\!\!R\else$I\!\!R$\fi}}
 \newcommand{\beqn}{\begin{equation}}
 \newcommand{\eeqn}{\end{equation}}
 
 \newcommand\eref[1]{(\ref{#1})}

 \newcommand{\al}{\alpha}

 \newcommand{\sgn}{\mathrm{sgn}}

 \newcommand{\argmin}{\mathrm{argmin}}

 \newcommand{\ran}{\rangle}
 \newcommand{\lan}{\langle}
 \newcommand{\abs}[1]{\lvert#1\rvert}

 \DeclareGraphicsRule{.tif}{png}{.png}{`convert #1 `dirname
 #1`/`basename #1 .tif`.png}
 
 \title{Rescaled Pure Greedy AlgorithmÊ for Convex
 Optimization}
 \author{Zheming Gao, Guergana Petrova}
 \thanks{%
 Ê Ê This research was supported by the Office of
 Naval Research ContractÊ Ê ONR N00014-11-1-0712 and
 byÊ the NSF Grant DMS 1222715}%
 \begin{document}
 \maketitle
 \begin{abstract}
 We 
 suggest a new greedy strategy for convex optimization in
 Banach spaces and prove
 its convergent rates under a suitable behavior of theÊ
 modulus of uniform 
 smoothness of the objective function.\\
 
 \noindent
{\bf Key Words:} 
Greedy Algorithms,  Convex Optimization, Rates of Convergence.\\
 \end{abstract}

 \section{Introduction}
 \label{Intr}
 
 The main goal in convex optimization is the development and
 analysis ofÊ algorithmsÊ for solving the problem
 \begin{equation}
 \label{Eq}
 Ê \inf_{x\in \Omega} E(x),
 \end{equation}
 where $E$ is aÊ given convex function and
  $\Omega$ is a bounded convex subset of a  Banach space $X$.
 $E$ is called the {\it objective} function and satisfies the
 convexity condition
 $$
 E(\gamma x+\delta y)\leq \gamma E(x)+\delta E(y), \quad
 x,y\in \Omega, \quad \gamma, \delta\geq 0, \quad
 \gamma+\delta=1.
 $$
 While the classical convex optimization deals with objective
 functions $E$ defined on subsets $\Omega$ in $\R^n$Ê
 for moderate values of $n$, see \cite{BV}, some of the 
 new applications require that the dimension $n$ is quite
 large or even $\infty$. The design of algorithms for such
 cases is quite challenging sinceÊ typical
 convergent results involve $n$, and
 therefore deteriorate severely with the growth of $n$. This
 is the so-calledÊ curse of dimensionality. 
 Recently, there has been an increased interest, see
 \cite{Z,Temlyakov1,Temlyakov2,DT1}, Ê in developing 
 greedy based strategies for solving (\ref{Eq}) with provable
 convergence rate depending only on the properties of $E$ and
 not on the
 dimension of the underlying space.Ê These algorithms
 provideÊ approximations $\{E(x_m)\}$,
 $m=1,2,\ldots$Ê to the solution of \eref{Eq}, with 
 $x_m$ being a linear combination of $m$ elements from a
 given dictionary ${\mathcal D}\subset X$. A dictionary  is any set  ${\mathcal D}$ of norm one elements from $  X$ whose span is dense in $X$. An example of
 a dictionary is any Shauder basis for $X$, or  a union of several Shauder bases. The 
 current algorithms pick the initial approximation 
 $E(x_0)$, $x_0=0$,  the set $\Omega$ as  
 \beqn
\nonumber
 \Omega:=\{x\in X \,\,:\,\,E(x)\leq E(0)\},
 \eeqn
 since the global minimum of $E$ is
 attained on that set, and
 generate a sequence of  successive approximations $E_m:=E(x_m)$,
 $m=1,2,\ldots$ recursively, using the dictionary ${\mathcal
 D}$. Some methods, such as the 
 Weak Chebychev Greedy Algorithm, see \cite{Temlyakov1},
 provide at Step $m$ an approximant  $x_m$ to the point $\bar x$ at which $E$ attains its global minimum,
 determined as 
 $$
 x_m:=\argmin_{x\in span\{\varphi_{j_1}, \ldots,
 \varphi_{j_m}\}} E(x),
 $$
 where $\varphi_{j_1}, \ldots, \varphi_{j_m}$ are suitably
 chosen elements from ${\mathcal D}$. Others
 choose $x_m$ as
 $$
 x_m:=\argmin_{\omega, \lambda \in \R} E(\omega
 x_{m-1}+\lambda \varphi_m),
 $$
 or
 $$
 x_m:=\argmin_{\lambda \in [0,1]} E((1-\lambda)
 x_{m-1}+\lambda \varphi_m)
 $$
 for suitably chosen $\varphi_m\in {\mathcal D}$, where
 $x_{m-1}$ is the previously generated point. Convergence
 rates for these algorithms are proved to be of order
 ${\mathcal O}(m^{1-q})$, where $q$ is a parameter related to the
 smoothness of the objective function $E$. Note that the last
 two approaches are more computationally friendly, since they  require
 solving two or one dimensional optimization problems at each
 step. On the other hand, some of these algorithms work only if 
 the minimum of $E$ is
 attained in the convex hull of ${\mathcal D}$, since
 the approximant $x_m$ is derived  as a
 convex combination of $x_{m-1}$ and $\varphi_m$.

  In this
 paper, we introduce a new greedy algorithm based on one
 dimensional optimization at each step, which does not
 require the solution of \eref{Eq} to belong to the convex
 hull of ${\mathcal D}$ and has a 
 rate of convergence ${\mathcal O}(m^{1-q})$. This algorithm is an
 appropriate modification of the recently introduced
 Rescaled Pure Greedy Algorithm ({\bf RPGA}) for approximating functions in 
 Hilbert and Banach spaces, see \cite{P}. We call it {\bf RPGA(co)}.Ê  
 The paper is organized as follows. In Section \S\ref{cond},
 we list several definitions and known results about 
 convex functions. 
In  section \S\ref{rgreedy}, we  present  the {\bf RPGA(co)} and prove its convergence 
 rate. The rest of the paper describes the weak version of this algorithm.

 \section{Preliminaries}
 \label{cond}
 Let usÊ first recall that a function $E$ is Frechet
 differentiable at $x\in \Omega$ if
 Ê there exists a bounded linear functional, denoted by
 $E'(x)\in X^*$,Ê such that
 $$
 \lim_{h \to 0} \frac {|E( x+h)-E( x)-\lan E'(\bar
 x),h\ran|}{\|h\|} =0.
 $$
 Here we  use the notation 
 $\langle F,x\rangle:=F(x)$ to denote the action of the
 functional $F\in X^*$ on the element $x\in X$.

 The followingÊ lemmas are well known and we simply
 state them.
 \begin{lemma}
 \label{OO}
 Let $E$ be a Frechet differentiable function at each point
 in $\Omega$ andÊ convex on $X$. Then, for all $x\in
 \Omega$ and $x'\in X$,
 $$
 \lan E'(x),x-x' \ran \geq E(x)-E(x').
 $$
 \end{lemma}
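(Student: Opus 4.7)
The plan is to prove the subgradient-type inequality by combining the definition of convexity with Frechet differentiability along the line segment from $x$ toward $x'$.

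First, I would fix $x \in \Omega$ and $x' \in X$. If $x = x'$ there is nothing to prove, so I assume $x \neq x'$. For $\lambda \in (0,1]$, convexity of $E$ on $X$ gives
$$
E\bigl(x + \lambda(x' - x)\bigr) = E\bigl((1-\lambda)x + \lambda x'\bigr) \leq (1-\lambda) E(x) + \lambda E(x').
$$
Subtracting $E(x)$ from both sides and dividing by $\lambda > 0$ yields the difference-quotient inequality
$$
\frac{E\bigl(x + \lambda(x' - x)\bigr) - E(x)}{\lambda} \leq E(x') - E(x).
$$

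Next, I would let $\lambda \to 0^+$ and invoke Frechet differentiability of $E$ at $x \in \Omega$ with the increment $h := \lambda(x'-x)$. Since $\|h\| = \lambda\|x'-x\|$ and $h \to 0$ as $\lambda \to 0^+$, the definition of $E'(x)$ gives
$$
E(x+h) - E(x) = \langle E'(x), h\rangle + o(\|h\|) = \lambda \langle E'(x), x' - x\rangle + o(\lambda).
$$
Dividing by $\lambda$ and passing to the limit, the left side of the difference-quotient inequality tends to $\langle E'(x), x' - x\rangle$, so
$$
\langle E'(x), x' - x\rangle \leq E(x') - E(x),
$$
which is equivalent to the stated inequality $\langle E'(x), x - x'\rangle \geq E(x) - E(x')$.

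There is no real obstacle here; the only point requiring a small amount of care is that $E$ need not be differentiable at $x + h$ for $h \neq 0$, but this is not used — the argument only invokes differentiability at the single point $x \in \Omega$, and $E$ is defined (via its convexity on all of $X$) at the perturbed point $x + \lambda(x'-x)$. The two ingredients — convexity to produce the difference quotient bound, and Frechet differentiability to identify its limit — fit together directly.
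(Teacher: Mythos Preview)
Your proof is correct and is the standard argument for this subgradient inequality: convexity yields the difference-quotient bound, and Frechet differentiability at $x$ identifies the limit as $\langle E'(x), x'-x\rangle$. The paper does not actually prove this lemma---it introduces it with ``The following lemmas are well known and we simply state them''---so there is no alternative approach to compare against; your argument supplies exactly the classical justification.
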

  \begin{lemma}
 \label{FT}
 LetÊ $E$ be a Frechet differentiable convex function,
 defined on a convex domain $\Omega$.Ê 
 Then $E$ has a global minimum at $\bar x \in \Omega$ if and
 only if $E'(\bar x)=0$.
 \end{lemma}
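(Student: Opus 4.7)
The plan is to prove the two directions separately, with the reverse (sufficiency) direction being essentially a one-line application of Lemma \ref{OO} and the forward (necessity) direction requiring a standard first-order argument based on Frechet differentiability plus the convexity of the domain.

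For the \emph{sufficiency} direction (if $E'(\bar x)=0$ then $\bar x$ is a global minimum), I would simply invoke Lemma \ref{OO} with $x=\bar x$: for every $x'\in \Omega$,
$$
0 = \langle E'(\bar x),\bar x - x'\rangle \geq E(\bar x)-E(x'),
$$
which yields $E(\bar x)\leq E(x')$ for every $x'\in\Omega$. This is the entire argument for this direction, and no extra ingredient is needed.

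For the \emph{necessity} direction, I would assume $\bar x\in\Omega$ is a global minimum. Using that $\Omega$ is a (convex) domain, so that $\bar x$ is an interior point, for any $h\in X$ there is $t_0>0$ with $\bar x\pm t h\in \Omega$ for all $0<t<t_0$. Minimality gives $E(\bar x+th)\geq E(\bar x)$, and Frechet differentiability at $\bar x$ lets me write
$$
E(\bar x+th)-E(\bar x) = t\langle E'(\bar x),h\rangle + r(t),\qquad r(t)/t\to 0,
$$
so dividing by $t>0$ and letting $t\to 0^+$ produces $\langle E'(\bar x),h\rangle \geq 0$. Repeating with $-h$ in place of $h$ gives the reverse inequality, hence $\langle E'(\bar x),h\rangle=0$ for every $h\in X$, i.e.\ $E'(\bar x)=0$.

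The only potential obstacle is the ambiguity about whether $\bar x$ lies in the interior of $\Omega$; at a boundary minimizer one only obtains $\langle E'(\bar x),h\rangle\geq 0$ for admissible $h$, and $E'(\bar x)$ need not vanish. I would handle this by interpreting ``convex domain'' in the standard sense of an open convex set (which is consistent with how $\Omega$ is used elsewhere in the paper, where the minimum is attained in the sublevel set $\{E\leq E(0)\}$ whose interior contains $\bar x$ under mild nondegeneracy). With that interpretation, the two directional-derivative inequalities combine to give the conclusion, and no deeper argument is required.
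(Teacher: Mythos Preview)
The paper does not actually prove this lemma: it introduces Lemmas \ref{OO}, \ref{FT}, and \ref{lm0} with the sentence ``The following lemmas are well known and we simply state them,'' and gives no argument. Your proof is correct and is exactly the standard one; the sufficiency direction via Lemma \ref{OO} is the natural route given the paper's setup, and the necessity direction is the usual directional-derivative argument. Your caveat about boundary points is well taken and appropriately handled by reading ``domain'' as open; nothing more is needed.
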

 Ê \begin{lemma}
 \label{lm0}Ê 
 Let $F$ be a Frechet differentiable function and $x^*$ be such that 
  $x^*=\argmin \{F(x):{x=t\varphi, t\in
 \R}\}$. 
 Then, $\langle F'( x^*),x^*\rangle =0$.
 \end{lemma}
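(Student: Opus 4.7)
The plan is to reduce the statement to the one-variable Fermat criterion by restricting $F$ to the line $\{t\varphi : t\in\R\}$. Define $g:\R\to\R$ by $g(t):=F(t\varphi)$. Since $x^*=t^*\varphi$ is a minimizer of $F$ over this line, $t^*$ is a global minimizer of $g$, and in particular a critical point once we know $g$ is differentiable.

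Next, I would compute $g'(t^*)$ using the Frechet differentiability of $F$ at $t^*\varphi$. By the definition of the Frechet derivative applied with increment $h=s\varphi$, one obtains
\[
g(t^*+s)-g(t^*)=F(t^*\varphi+s\varphi)-F(t^*\varphi)=s\,\langle F'(t^*\varphi),\varphi\rangle+o(|s|),
\]
so $g$ is differentiable at $t^*$ with $g'(t^*)=\langle F'(x^*),\varphi\rangle$. Since $t^*$ minimizes $g$ on all of $\R$, we have $g'(t^*)=0$, hence $\langle F'(x^*),\varphi\rangle=0$.

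Finally, I would multiply by $t^*$ and use linearity of the functional $F'(x^*)$ to conclude
\[
\langle F'(x^*),x^*\rangle=\langle F'(x^*),t^*\varphi\rangle=t^*\langle F'(x^*),\varphi\rangle=0.
\]
The degenerate case $t^*=0$ gives $x^*=0$ and the identity holds trivially.

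There is essentially no obstacle here; the only mild point requiring care is justifying the chain rule, i.e., that the Frechet derivative of $F$ at $x^*$ in direction $\varphi$ coincides with the ordinary derivative of $g$ at $t^*$. This follows directly from the definition of Frechet differentiability recorded earlier in the preliminaries, by choosing the increment $h$ along the one-dimensional subspace spanned by $\varphi$.
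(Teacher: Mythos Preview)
Your argument is correct and is exactly the standard one: restrict $F$ to the line $\{t\varphi:t\in\R\}$, use Frechet differentiability to identify the derivative of $g(t)=F(t\varphi)$ with the directional derivative $\langle F'(t\varphi),\varphi\rangle$, apply the one-variable Fermat rule at the interior minimizer $t^*$, and scale by $t^*$. The paper itself does not supply a proof of this lemma---it lists it among results that are ``well known and we simply state them''---so there is nothing to compare against; your write-up would serve perfectly well as the omitted justification.
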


In this paper, we consider objective functions $E$ that satisfy the following two assumptions.
 \begin{itemize}
 \item
 {\bf Condition 0:}
 $E$ has Frechet derivative $E'(x)\in X^*$
at each point in $\Omega:=\{x\in X \,\,:\,\,E(x)\leq E(0)\}$, $\Omega$ is bounded,  andÊ
$$
\|E'(x)\|\leq M_0, \quad x\in \Omega.
$$
 
 \item
 {\bf Uniform Smoothness (US):} There are constants $0\leq
 \al$, $M>0$, and $1<q\leq 2$, such that 
 for all $x$, $x'$ with $\|x-x'\|\leq M$, $x\in \Omega$,
 \begin{equation}
\nonumber
 E(x')-E(x)-\langle E' (x),x'-x\rangle \leq \al \|x'-x\|^q.
 \end{equation}
 \end{itemize}

The {\bf US} condition on $E$ is closely
 related to a condition on the modulus of smoothness of $E$.
 Recall that 
 for aÊ convex function $E: X \to \R$ and a set $S
 \subset X$, the modulus of smoothness of $E$ on $S$ is
 defined by 
 \begin{equation}
 \nonumber
 Ê \rho(E,u) :=\frac 12 \sup_{ x\in
 S,\|y\|=1}\left\{E(x+uy)+E(x-uy)-2E(x)\right\} ,\quad
 u>0,
 \end{equation}
 andÊ the modulus of uniform smoothness of $E$ on $S$ is
 defined by $\rho_1:= \rho_{1}(E,u)$
 \begin{equation}
\nonumber
 \rho_{1}:=\sup_{x\in S, \|y\|=1, \lambda \in
 (0,1)}\left\{\frac{(1-\lambda)E(x-\lambda uy)+\lambda
 E(x+(1-\lambda) uy)-E(x)}{\lambda(1-\lambda)}\right\}.
 \end{equation}
 TheseÊ two moduli of smoothnessÊ are equivalent
 (seeÊ \cite{Za}, page 205), as the following lemma
 states.
 \begin{lemma}
 \label{equimodu}
 Let $E$ be a convex function defined on $X$, $S\subset X$,
 and $\rho(E,\cdot)$ and $\rho_{1}(E,\cdot)$ be its
 modulus of smoothness and modulus of uniform smoothness,
 respectively. Then we have 
 \begin{equation}
 \nonumber
 4\rho( E,\frac u 2) \leq \rho_{1}(E,u) \leq 2
 \rho(E,u).
 \end{equation}
 \end{lemma}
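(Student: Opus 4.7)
The plan is to establish the two inequalities separately by purely elementary manipulations that use only convexity of $E$ and the defining suprema; no differentiability or further smoothness assumption on $E$ is needed. The key observation is that the numerator appearing in the $\rho_1$ expression is the same kind of symmetric second difference that appears in $\rho$, but with asymmetric weights $\la$ and $1-\la$; choosing $\la=1/2$ recovers $\rho$, while general $\la$ can be controlled by convexity.

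\textbf{Lower bound} $4\rho(E,u/2)\le \rho_1(E,u)$. I would simply specialize $\la=1/2$ in the defining expression of $\rho_1$. The numerator collapses to
\[
\tfrac{1}{2}E(x-\tfrac{u}{2}y)+\tfrac{1}{2}E(x+\tfrac{u}{2}y)-E(x)=\tfrac{1}{2}\bigl[E(x-\tfrac{u}{2}y)+E(x+\tfrac{u}{2}y)-2E(x)\bigr],
\]
and the denominator is $\la(1-\la)=1/4$, so the ratio equals $2\bigl[E(x-\tfrac{u}{2}y)+E(x+\tfrac{u}{2}y)-2E(x)\bigr]$. Taking the supremum over $x\in S$ and $\|y\|=1$ and recognising the definition of $\rho(E,u/2)$ gives $\rho_1(E,u)\ge 4\rho(E,u/2)$.

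\textbf{Upper bound} $\rho_1(E,u)\le 2\rho(E,u)$. Here I would use convexity of $E$ along the segment from $x-uy$ to $x+uy$. For any $\la\in(0,1)$ one has the convex-combination identities
\[
x-\la uy=\la(x-uy)+(1-\la)x,\qquad x+(1-\la)uy=(1-\la)(x+uy)+\la x,
\]
so convexity yields $E(x-\la uy)\le \la E(x-uy)+(1-\la)E(x)$ and $E(x+(1-\la)uy)\le (1-\la)E(x+uy)+\la E(x)$. Substituting these bounds into the numerator of the $\rho_1$ expression, the coefficient of $E(x)$ becomes $(1-\la)^2+\la^2-1=-2\la(1-\la)$, and the remaining terms collapse to
\[
\la(1-\la)\bigl[E(x-uy)+E(x+uy)-2E(x)\bigr]\le 2\la(1-\la)\rho(E,u)
\]
by the definition of $\rho$. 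Dividing by $\la(1-\la)$ and taking the supremum over $x$, $y$, and $\la$ finishes the proof.

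There is no genuine obstacle: the only thing to be careful about is tracking the two convex combinations in the upper bound and verifying that the $E(x)$-coefficients telescope correctly. The statement is essentially immediate from convexity together with the two definitions.
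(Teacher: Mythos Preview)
Your proof is correct. Both inequalities follow exactly as you outline: the lower bound by specializing $\la=\tfrac12$ in the supremum defining $\rho_1$, and the upper bound by the two convex-combination estimates, whose $E(x)$-coefficients indeed telescope via $(1-\la)^2+\la^2-1=-2\la(1-\la)$.

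As for comparison with the paper: there is nothing to compare. The paper does not prove this lemma at all; it is listed among results that are ``well known'' and simply stated, with a pointer to \cite{Za}, page~205. Your argument is therefore a self-contained elementary proof supplying what the paper omits. It requires only convexity, matching the hypotheses of the lemma, and no Fr\'echet differentiability or smoothness is invoked---which is exactly right.
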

 The next lemma showsÊ the relation between the modulus
 of  smoothness and the {\bf US} condition. The proof
 of the version cited here can be found in \cite{NP}. Because
 of this lemma, the {\bf US} condition and the
 conditionÊ from \cite{Temlyakov1, Temlyakov2, DT1} on
 the modulus of smoothness
 of $E$ are equivalent.
 \begin{lemma}
 \label{uniformsmooth}
 Let $E$ be a convex function defined on a Banach space $X$
 and $E$ be Frechet differentiable on a set $S \subset X$.
 The following statements are equivalent for any $q\in (1,2]$
 and $M>0$.
 \begin{itemize}
 \label{st1}
 \itemÊ There exists $\al> 0$Ê such that
 for any $x \in S$,  $x' \in X$, $\|x-x'\|\leq M$, 
 \begin{equation}
 \label{unismooth1}
 E(x')-E(x)-\lan E'(x),x'-x \ran \leq \al \|x'-x\|^{q}.
 \end{equation}
 \item 
 \label{st2}
 There exists $\al_{1} >0$,Ê such that
 \begin{equation}
 \label{unismooth2}
 \rho(E,u,S) \leq \al_{1} u^{q}, \quad 0<u\leq M.
 \end{equation}
 \end{itemize}
 \end{lemma}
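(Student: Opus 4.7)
The plan is to prove the two implications separately. Both directions will follow from directly inserting the defining expressions into each other, together with the convexity inequality from Lemma~\ref{OO}; the first implication is a two-line symmetrization, while the reverse direction needs the lower bound from convexity to peel off the term $\langle E'(x),x'-x\rangle$ from the symmetric quantity appearing in $\rho$.

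For (\ref{unismooth1}) $\Rightarrow$ (\ref{unismooth2}), fix $x \in S$, a unit vector $y$, and $u \in (0,M]$. Applying (\ref{unismooth1}) once with $x' = x+uy$ and once with $x' = x-uy$ yields
$$
E(x+uy) - E(x) - u\langle E'(x),y\rangle \leq \al u^q, \qquad E(x-uy) - E(x) + u\langle E'(x),y\rangle \leq \al u^q.
$$
Adding these eliminates the linear term and gives $E(x+uy)+E(x-uy)-2E(x) \leq 2\al u^q$. Taking the supremum over $x \in S$ and $\|y\|=1$ and dividing by $2$ produces $\rho(E,u,S) \leq \al u^q$, so (\ref{unismooth2}) holds with $\al_1 = \al$.

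For the converse (\ref{unismooth2}) $\Rightarrow$ (\ref{unismooth1}), which is the substantive direction, I would fix $x \in S$, $x' \in X$ with $0 < \|x'-x\| \leq M$, and set $h = x'-x$, $u = \|h\|$, $y = h/u$. The hypothesis delivers the symmetric upper bound
$$
E(x+uy) + E(x-uy) - 2E(x) \leq 2\al_1 u^q.
$$
The key move is to use Lemma~\ref{OO} to turn the symmetric term $E(x-uy)$ into a one-sided object. Applying the convexity inequality with the roles reversed (that is, writing $\langle E'(x), x - (x-uy)\rangle \geq E(x) - E(x-uy)$) produces the lower bound $E(x-uy) \geq E(x) - u\langle E'(x),y\rangle$. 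Substituting this into the previous display cancels two copies of $E(x)$ and isolates $E(x+uy)$, giving
$$
E(x+uy) - E(x) - u\langle E'(x),y\rangle \leq 2\al_1 u^q.
$$
Recalling $uy = x'-x$, this is exactly (\ref{unismooth1}) with $\al = 2\al_1$.

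The main obstacle is the reverse direction: one has to recognize that the symmetric bound supplied by $\rho$ can be split asymmetrically by invoking convexity in the form of Lemma~\ref{OO} to control the ``minus'' side from below. Once this observation is made, the constants propagate cleanly and no use of the equivalent modulus $\rho_1$ from Lemma~\ref{equimodu} is required, although one could alternatively route the argument through $\rho_1$ to obtain essentially the same chain of inequalities.
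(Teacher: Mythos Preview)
Your argument is correct in both directions. The forward implication is the standard symmetrization, and in the reverse direction your use of the first-order convexity inequality to bound $E(x-uy)$ from below is exactly the right device to pass from the symmetric bound to the one-sided estimate, yielding $\alpha = 2\alpha_1$.

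There is, however, nothing in the paper to compare your proof against: the paper does not prove this lemma at all but simply cites \cite{NP} for the proof. One small remark on your write-up: Lemma~\ref{OO} as stated in the paper is formulated for $x\in\Omega$, not for a general set $S$ of Fr\'echet differentiability. What you actually need is just the first-order convexity inequality $E(z)\ge E(x)+\langle E'(x),z-x\rangle$, valid at any point $x$ where $E'(x)$ exists and for every $z\in X$; this is precisely what you use, so the step is sound even if the citation of Lemma~\ref{OO} is slightly loose.
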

 \noindent

 Next, we introduce some notation. Let  $\bar x$  be the solution to \eref{Eq}.
We  denote by $\|\bar x\|_1$ its semi-norm with respect to the dictionary ${\mathcal D}$, namely
 $$
 \|\bar x\|_1:=\inf \left\{\sum_{\varphi \in {\mathcal D}} |c_\varphi(\bar
 x)|:\,\,\bar x=\sum_{\varphi \in {\mathcal D}} c_\varphi(\bar
 x)\varphi\right\},
 $$ 
 where the infimum is taken over all possible representations of $\bar x$ as a linear combination 
 of dictionary elements.
Clearly,  the point $\bar x$ at which $E$ attains its global minimum
belongs to the set
$$
\Omega:=\{x:\,E(x)\leq E(0)\},
$$
and in what follows we will consider the minimization problem \eref{Eq} 
over this set. Note that this is a convex set as a level set of a convex function.

 Further in the paper  we will use the followingÊ lemma,
 proved in \cite{NP}.   Other versions of this lemma have been proved in \cite{Tbook}.
  \begin{lemma}
 \label{lmseq}
 Let $\ell>0$, $r>0$, $B>0$, andÊ
 $\{a_m\}_{m=1}^{\infty}$ andÊ $\{r_m\}_{m=2}^{\infty}$
 be sequences of non-negative numbers satisfying the
 inequalities
 $$ a_1\leq B, \quad a_{m+1} \leq a_m(1-
 \frac{r_{m+1}}{r}a_m^\ell), \quad m=1,2,\dots.$$
 Then, we have 
 \begin{equation}
 \label{tuti12}
 a_m \leq
 \max\{1,\ell^{-1/\ell}\}r^{1/\ell}(rB^{-\ell}+\Sigma_{k=2}^m
 r_k)^{-1/\ell}, \quad m=2,3, \ldots.
 \end{equation}
 \end{lemma}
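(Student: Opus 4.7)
The plan is to linearize the recursion by passing to the sequence $b_m := a_m^{-\ell}$. If $a_m=0$ for some $m$ then $a_k=0$ for all $k\geq m$ and the bound holds trivially, so I may assume $a_m>0$ throughout. The requirement $a_{m+1}\geq 0$ combined with the recursion forces $\tfrac{r_{m+1}}{r}a_m^\ell\leq 1$, which places us in the range where the generalized Bernoulli inequality $(1-x)^{-\ell}\geq 1+\ell x$ (valid for all $\ell>0$ and $0\leq x<1$, by a one-line second-derivative argument) applies.

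Raising the recursion to the $(-\ell)$-th power, which reverses the inequality since $\ell>0$, and then invoking Bernoulli should yield the additive recurrence
$$
a_{m+1}^{-\ell}\;\geq\; a_m^{-\ell}\Bigl(1-\tfrac{r_{m+1}}{r}a_m^\ell\Bigr)^{-\ell}\;\geq\; a_m^{-\ell}+\frac{\ell\,r_{m+1}}{r}.
$$
A one-line induction starting from $a_1^{-\ell}\geq B^{-\ell}$ then telescopes to
$$
a_m^{-\ell}\;\geq\; B^{-\ell}+\frac{\ell}{r}\sum_{k=2}^m r_k,
$$
which rearranges to $a_m\leq r^{1/\ell}\bigl(rB^{-\ell}+\ell\sum_{k=2}^m r_k\bigr)^{-1/\ell}$.

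The only genuine issue is matching the stated conclusion, which lacks the extra factor $\ell$ in front of $\sum r_k$. I would handle this by splitting on the size of $\ell$. For $\ell\geq 1$ one has $\ell\sum r_k\geq \sum r_k$ and $\ell^{-1/\ell}\leq 1$, so dropping $\ell$ only weakens the bound and the prefactor $\max\{1,\ell^{-1/\ell}\}=1$ is correct. For $0<\ell<1$ the constant $\ell^{-1/\ell}>1$ must be paid; the required comparison
$$
\Bigl(rB^{-\ell}+\ell\textstyle\sum_{k=2}^m r_k\Bigr)^{-1/\ell}\leq \ell^{-1/\ell}\Bigl(rB^{-\ell}+\sum_{k=2}^m r_k\Bigr)^{-1/\ell}
$$
reduces after raising to the $-\ell$ power to $\ell\cdot rB^{-\ell}\leq rB^{-\ell}$, which is immediate. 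I expect the Bernoulli step to be the only substantive insight; everything else is bookkeeping and the case split on $\ell$.
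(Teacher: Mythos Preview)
Your argument is correct. The paper does not actually supply a proof of this lemma; it cites \cite{NP} (and mentions related versions in \cite{Tbook}) and merely uses the result. Your linearization via $b_m=a_m^{-\ell}$ together with the Bernoulli inequality $(1-x)^{-\ell}\geq 1+\ell x$ is exactly the standard technique for such recursions (this is how it is done in Temlyakov's book and the cited literature), and your case split on $\ell\gtrless 1$ to absorb the extra factor $\ell$ into the prefactor $\max\{1,\ell^{-1/\ell}\}$ is clean and accurate.

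One very minor point of bookkeeping: when you reduce to ``$a_m>0$ throughout,'' you should phrase it as taking $n$ to be the first index (if any) with $a_n=0$; the bound is trivial for $k\geq n$, and for $2\leq k<n$ all the $a_j$ involved are strictly positive so your Bernoulli step applies. Your statement ``$a_{m+1}\geq 0$ forces $\tfrac{r_{m+1}}{r}a_m^\ell\leq 1$'' is justified precisely because in that range $a_{m+1}>0$ as well, giving the strict inequality needed to raise to the power $-\ell$. This is implicit in what you wrote, but worth saying explicitly in a final write-up.
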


 \section{The Rescaled Pure Greedy Algorithm for Convex Optimization}
 \label{rgreedy}
 In this section, weÊ describe our new algorithm with parameter $\mu$ and dictionary ${\mathcal D}$.
 
 \noindent
  {\bf RPGA(co)($\mu,{\mathcal D}$)}:
 \begin{itemize}
 \itemÊ {\bf Step $0$}: 
 Define $x_0=0$.Ê 
 If $E'(x_0)=0$, stop the algorithm and define $x_k:=x_0=\bar x$,
 $k\geq 1$.
 
 \item {\bf Step $m$}:Ê Assuming $x_{m-1}$ has been
 defined and $E'(x_{m-1})\neq 0$. Choose a direction 
 $\varphi_{j_m}\in {\mathcal D}$ such that 
 $$
 |\langle E'(x_{m-1}),\varphi_{j_m} \rangle|
 =\sup_{\varphi \in \mathcal D} |\langle
 E'(x_{m-1}),\varphi\rangle|.
 $$
 With
 $\hat x_m:=x_{m-1}-\lambda_m\varphi_{j_m}$, 
 where
$$
 \lambda_m:=\sgn\{\lan E'(x_{m-1}), \varphi_{j_m}\ran\}
 \left(\alpha\mu\right)^{-\frac{1}{q-1}}\abs{\lan
 E'(x_{m-1}),\varphi_{j_m}\ran}^{\frac{1}{q-1}}, 
 $$
 $$
 t_m:=\argmin_{t\in \R} E(t\hat x_m), 
 $$
 define the next point to be
 $$
 x_m=t_m\hat x_m.
 $$
 \item If $E'(x_{m})=0$, stop the algorithm and define $x_k=x_m=\bar x$, for $k>m$.
 \item If $E'(x_{m})\neq0$, proceed to Step $m+1$.
 \end{itemize}
Let us observe that , because of Lemma \ref{FT},Ê if
 $E'(x_m)=0$ at Step $m$, the output $x_m$ of the
 algorithm isÊ the minimizer $\bar x$. Note that the
 algorithm requires a  minimization of  the objective function along the
 one dimensional space $span \{\hat x_m\}$. This univariate optimization problem 
 is called line search and  is well studied in optimization theory, see \cite{N}.
 If at Step $m$ we were to use $\hat x_m$ as next approximant and not $x_m$, which is the 
 minimizer of $E$ along the line generated by $\hat x_m$, then the 
 algorithm would be very similar to the {\bf EGA(${\mathcal C})$} from  \cite{Temlyakov1} . The author
 proves  a  convergence rate of ${\mathcal O}(m^{-r})$, for any $r\in (0,\frac{q-1}{q+1})$ for this algorithm under suitable conditions on the 
 parameters. Note that our algorithm, which simply adds a one dimensional optimization at each step,
  makes it possible to achieve an optimal  convergence rate of 
 ${\mathcal O}(m^{1-q})$. Observe also that, in contrast to the other
 greedy algorithms from \cite{Temlyakov1} that rely on 
 one dimensional minimization at each step, this algorithm
 provides convergent results for all $\bar x$, and not only
 for $\bar x$Ê in the convex hull of the dictionary
 $\mathcal D$.ÊÊÊ

 Notice that  all outputs $\{x_k\}_{k=1}^{\infty}$ generated by theÊ
 {\bf RPGA(co)($\mu, {\mathcal D}$) }  are in $\Omega$, sinceÊ $E(x_k)\leq E(0)$. The
 following theorem is our main convergence result.
 \begin{theorem}
 \label{wdga}
 Let theÊ convex  function $E$ satisfy {\bf Condition
 0}Ê and the {\bf US} condition. Then, at  Step $k$, the  {\bf RPGA(co)}($\mu, {\mathcal D}$) with parameter $\mu>\max\{1,\alpha^{-1}M_0M^{1-q}\}$,Ê applied to
 $E$ and 
 aÊ dictionary ${\mathcal D}=\{\varphi\}$ outputs  the point $x_k$,
 where 
 $$
 e_k:=E(x_k)-E(\bar x)\leq C_1 k^{1-q},\quad k\geq 2,
 $$
with
 $C_1=C_1(q,\alpha,E,\mu)$.
 \end{theorem}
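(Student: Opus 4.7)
The plan is to derive a one-step descent inequality of the form $e_m \le e_{m-1}(1 - \kappa\, e_{m-1}^{1/(q-1)})$ and then invoke Lemma \ref{lmseq} with $\ell = 1/(q-1)$ and $r_{m+1} \equiv 1$, which immediately yields the $k^{1-q}$ rate. If the algorithm ever terminates with $E'(x_m)=0$, Lemma \ref{FT} gives $x_m = \bar x$ and the bound is trivial, so I will assume throughout that the process runs indefinitely.

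First I would apply the \textbf{US} condition to the pair $x_{m-1}$, $\hat x_m = x_{m-1}-\lambda_m \varphi_{j_m}$. Since $\|\varphi_{j_m}\|=1$, we have $\|\hat x_m-x_{m-1}\|=|\lambda_m|$, and plugging in the explicit value of $\lambda_m$ gives $|\lambda_m| \le (\alpha\mu)^{-1/(q-1)}M_0^{1/(q-1)}$, which is bounded by $M$ exactly because $\mu \ge \alpha^{-1}M_0 M^{1-q}$; this is what legitimizes the use of \textbf{US}. A direct calculation then shows
\[
E(\hat x_m) - E(x_{m-1}) \le -(\alpha\mu)^{-1/(q-1)}\,\frac{\mu-1}{\mu}\,|\langle E'(x_{m-1}),\varphi_{j_m}\rangle|^{q/(q-1)},
\]
so the assumption $\mu>1$ is what makes the coefficient strictly negative. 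Since $x_m$ minimizes $E$ on the line $\mathrm{span}\{\hat x_m\}$, $E(x_m) \le E(\hat x_m)$, and the same inequality holds with $E(\hat x_m)$ replaced by $E(x_m)$.

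Next I would lower bound $|\langle E'(x_{m-1}),\varphi_{j_m}\rangle|$ in terms of $e_{m-1}$. By Lemma \ref{lm0} (for $m\ge 2$) and the trivial fact $x_0=0$ (for $m=1$), one has $\langle E'(x_{m-1}),x_{m-1}\rangle = 0$. Combining this with Lemma \ref{OO} applied at $x_{m-1}$ and $\bar x$, and expanding $\bar x = \sum c_\varphi(\bar x)\varphi$ so that $\sum |c_\varphi(\bar x)|$ is as close to $\|\bar x\|_1$ as desired, the greedy choice of $\varphi_{j_m}$ yields
\[
e_{m-1} \le -\langle E'(x_{m-1}),\bar x\rangle \le \|\bar x\|_1\, |\langle E'(x_{m-1}),\varphi_{j_m}\rangle|.
\]
Substituting this bound into the descent inequality gives $e_m \le e_{m-1} - c\,\|\bar x\|_1^{-q/(q-1)}\, e_{m-1}^{q/(q-1)}$ with $c = (\alpha\mu)^{-1/(q-1)}(\mu-1)/\mu$, i.e.\ $e_m \le e_{m-1}\bigl(1 - c\,\|\bar x\|_1^{-q/(q-1)}\, e_{m-1}^{1/(q-1)}\bigr)$.

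Finally, I would apply Lemma \ref{lmseq} with $a_m = e_m$, $\ell = 1/(q-1)$, $r_{m+1} \equiv 1$, $r = c^{-1}\|\bar x\|_1^{q/(q-1)}$, and base constant $B = E(0)-E(\bar x)$ (note $e_1 \le e_0 = E(0)-E(\bar x)$ since $E(x_1)\le E(\hat x_1)\le E(x_0)$). This gives $e_k \le C_1 k^{1-q}$ with a constant $C_1$ depending only on $q,\alpha,\mu,\|\bar x\|_1$, and $E(0)-E(\bar x)$, as claimed. The main subtle points are (i) checking the step-size $|\lambda_m|\le M$ so that \textbf{US} is actually applicable, which is exactly the role of the lower bound on $\mu$, and (ii) using Lemma \ref{lm0} to eliminate the $\langle E'(x_{m-1}),x_{m-1}\rangle$ term, which would otherwise require controlling $\|x_{m-1}\|$; the rest is bookkeeping.
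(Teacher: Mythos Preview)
Your proposal is correct and follows essentially the same route as the paper: verify $|\lambda_m|\le M$ via the lower bound on $\mu$ to justify \textbf{US}, obtain the one-step descent inequality, use Lemma~\ref{lm0} together with Lemma~\ref{OO} and an $\varepsilon$-near-optimal representation of $\bar x$ to bound $|\langle E'(x_{m-1}),\varphi_{j_m}\rangle|\ge e_{m-1}/\|\bar x\|_1$, and then apply Lemma~\ref{lmseq} with $\ell=1/(q-1)$. The only cosmetic difference is that the paper takes $r_k=(\mu-1)/\mu$ and $r=(\alpha\mu\|\bar x\|_1^q)^{1/(q-1)}$ whereas you absorb the factor $(\mu-1)/\mu$ into $r$ and set $r_{m+1}\equiv 1$; both choices yield the same $k^{1-q}$ bound.
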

 \begin{proof}
  Clearly, we haveÊ $e_1=E(x_1)-E(\bar x)\leq E(0)-E(\bar
 x)$.
 Next, 
 we considerÊ Step $k$, $k=2,3,\ldots$ of the algorithm. Notice that 
 $x_k\in\Omega$, since $E(x_k)\leq E(0)$. 
 The definition  of $\lambda_k$ and the choice of parameter $\mu$ assures that 
$$
\|(x_{k-1}-\lambda_k\varphi_{j_k})-x_{k-1}\|=\left(\frac{\abs{\lan
 E'(x_{k-1}),\varphi_{j_k}\ran}}{\alpha \mu}\right )^{\frac{1}{q-1}}\leq M,
 $$
 and therefore, applying  the {\bf US}Ê  condition  to $(x_{k-1}-\lambda_k \varphi_{j_k})$
 and $x_{k-1}$ gives
\begin{eqnarray}
\nonumber
 E(\hat x_k)&=&E(x_{k-1}-\lambda_k\varphi_{j_k})\leq
 E(x_{k-1})-\lambda_k\lan E'(x_{k-1}),\varphi_{j_k}\ran+\alpha|\lambda_k|^q
 \nonumber \\
 &=&E(x_{k-1})-
 \frac{\mu-1}{\mu}\left(\alpha \mu\right)^{-\frac{1}{q-1}}\abs{\lan
 E'(x_{k-1}),\varphi_{j_k}\ran}^{q/(q-1)},
 \nonumber
 \end{eqnarray}
 where we use the fact that $\|\varphi_{j_k}\|\leq1$.
 Since 
 $E(x_k)\leq E(\hat x_k)$, we derive that 
 \begin{equation}
 \label{ol}
 \,E(x_k)\leq 
 E(x_{k-1})-\frac{\mu-1}{\mu}\left(\alpha
 \mu\right)^{-\frac{1}{q-1}}\abs{\lan
 E'(x_{k-1}),\varphi_{j_k}\ran}^{\frac{q}{q-1}}.
 \end{equation}
 Next, we provide a lower bound for $\abs{\lan
 E'(x_{k-1}),\varphi_{j_k}\ran}$.
 Let us fix $\varepsilon >0$ and choose a representation
 for $\bar x=\sum_{\varphi\in {\mathcal D}}c_\varphi^\varepsilon \varphi$,
 such that 
 $$
 \sum_{\varphi\in {\mathcal D}}|c_\varphi^\varepsilon|<\|\bar x\|_1+\varepsilon.
 $$
 Since $\lan E'(x_{k-1}),x_{k-1}\ran=0$, because of the
 choice of $x_{k-1}$ and Lemma \ref{lm0}, we have that 
 \begin{eqnarray}
 \nonumber
 \lan E'(x_{k-1}),x_{k-1}-\bar x\ran&=&-\lan
 E'(x_{k-1}),\bar
 x\ran=-\sum_{\varphi}c_\varphi^\varepsilon\lan
 E'(x_{k-1}),\varphi\ran\\
 \nonumber
 &\leq& |\lan
 E'(x_{k-1}),\varphi_{j_k}\ran|\sum_{\varphi}|c_\varphi^\varepsilon|
 \nonumber \\
 &<&|\lan
 E'(x_{k-1}),\varphi_{j_k}\ran(\|\bar x\|_1+\varepsilon),
 \nonumber
 \end{eqnarray}
 where we have used the choice of $\varphi_{j_k}$.
We let  $\varepsilon \rightarrow 0$ and  obtain the inequality
 \begin{equation}
 \label{pop}
 \lan E'(x_{k-1}),x_{k-1}-\bar x\ran\leq |\lan
 E'(x_{k-1}),\varphi_{j_k}\ran|\|\bar x\|_1.
 \end{equation}
 On the other hand, Lemma \ref{OO} and \eref{pop} give that
 $$
 \|\bar x\|_1^{-1}e_{k-1}\leq |\lan
 E'(x_{k-1}),\varphi_{j_k}\ran|,
 $$
 which is the desired estimate from below for $\abs{\lan
 E'(x_{k-1}),\varphi_{j_k}\ran}$
 We substitute the latter inequality in \eref{ol} and derive
 $$
 E(x_k)\leq
 E(x_{k-1})-\frac{\mu-1}{\mu}\left(\alpha
 \mu\right)^{-\frac{1}{q-1}}\|\bar x\|_1^{-\frac{q}{q-1}}e_{k-1}^{\frac{q}{q-1}}.
 $$
 Subtracting $E(\bar x)$ from both sides gives
 $$
 e_k\leq e_{k-1}\left (1-\frac{\mu-1}{\mu}\left(\alpha
 \mu\right)^{-\frac{1}{q-1}}\|\bar x\|_1^{-\frac{q}{q-1}}e_{k-1}^{\frac{1}{q-1}}\right )
 $$
Now we apply    Lemma \ref{lmseq} for  the sequence of errors
 $\{e_k\}_{k=1}^\infty$ and
 $$
r_k=\frac{\mu-1}{\mu}, \quad \ell=\frac{1}{q-1}>0, \quad B=E(0)-E(\bar x), \quad
 r=\left(\alphaÊ\mu \|\bar x\|_1^q\right)^{\frac{1}{q-1}},
 $$
and derive that
 $$
 e_k\leq \alphaÊ\mu \|\bar x\|_1^q\left (
 \left (\frac{\alpha\mu\|\bar x\|_1^q}{E(0)-E(\bar x)}\right )^{\frac{1}{q-1}}+\frac{\mu-1}{\mu}(m-1)\right )^{1-q},
 $$
 and the proof is completed.
 \end{proof}
Notice that   we can  optimize with respect to the parameter $\mu$ and select  a specific value for $\mu>\max\{1,\alpha^{-1}M_0M^{1-q}\}$ that will guarantee the 
 best convergence rate in terms of best constants.

 \section{The Weak Rescaled Pure Greedy Algorithm for Convex Optimization}
 \label{greedy}
 In this section, weÊ describe the weak version of our algorithm with weakness 
 sequence $\{\ell_k\}$, $\ell_k\in (0,1]$ $k=1,2,\ldots$, and parameter sequence $\{\mu_k\}$, $\mu_k>\max\{1,\alpha^{-1}M_0M^{1-q}\}$, $k=1,2,\ldots$.
 In the case when $\ell_k=1$ and $\mu_k=\mu$, $k=1,2,\ldots$, the {\bf WRPGA(co)($\{\ell_k\},\{\mu_k\},{\mathcal D}$)} is the {\bf RPGA(co)($\mu,{\mathcal D}$)}.
 The weakness sequence allows us to have some freedom in the selection of the next direction $\varphi_{j_k}$, while the 
 parameter sequence $\{\mu_k\}$ gives more choices in how much to advance along the selected direction $\varphi_{j_k}$.
 
 \noindent
  {\bf WRPGA(co)($\{\ell_k\},\{\mu_k\},{\mathcal D}$)}:
 \begin{itemize}
 \itemÊ {\bf Step $0$}: 
 Define $x_0=0$.Ê 
 If $E'(x_0)=0$, stop the algorithm and define $x_k:=x_0=\bar x$,
 $k\geq 1$.
 
 \item {\bf Step $m$}:Ê Assuming $x_{m-1}$ has been
 defined and $E'(x_{m-1})\neq 0$. Choose a direction 
 $\varphi_{j_m}\in {\mathcal D}$ such that 
 $$
 |\langle E'(x_{m-1}),\varphi_{j_m} \rangle|
 \geq \ell_m\sup_{\varphi \in \mathcal D} |\langle
 E'(x_{m-1}),\varphi\rangle|.
 $$
 With
 $\hat x_m:=x_{m-1}-\lambda_m\varphi_{j_m}$, 
 where
$$
 \lambda_m:=\sgn\{\lan E'(x_{m-1}), \varphi_{j_m}\ran\}
 \left(\alpha\mu_m\right)^{-\frac{1}{q-1}}\abs{\lan
 E'(x_{m-1}),\varphi_{j_m}\ran}^{\frac{1}{q-1}}, 
 $$
 $$
 t_m:=\argmin_{t\in \R} E(t\hat x_m), 
 $$
 define the next point to be
 $$
 x_m=t_m\hat x_m.
 $$
 \item If $E'(x_{m})=0$, stop the algorithm and define $x_k=x_m=\bar x$, for $k>m$.
 \item If $E'(x_{m})\neq0$, proceed to Step $m+1$.
 \end{itemize}
The next theorem is the main result about the convergence
 rate of the {\bf WRPGA(co)}($\{\ell_k\}, \{\mu_k\}, \mathcal D$).
 \begin{theorem}
 \label{wdga}
 Let theÊ convex  function $E$ satisfy {\bf Condition
 0}Ê and the {\bf US} condition. Then, at  Step $k$, the  {\bf WRPGA(co)}($\{\ell_k\}, \{\mu_k\}, \mathcal D$),Ê applied to
 $E$ and 
 aÊ dictionary ${\mathcal D}=\{\varphi\}$ outputs  the point $x_k$,
 where 
 $$
 e_k:=E(x_k)-e(\bar x)\leq \alphaÊ \|x\|_1^q\left
 (C_1+\sum_{j=2}^k(\mu_j-1)\left (\frac{\ell_j}{\mu_j}\right
 )^{\frac{q}{q-1}}\right )^{1-q},\quad k\geq 2,
 $$
 with
 $C_1=C_1(q,\alpha,E)$.
 \end{theorem}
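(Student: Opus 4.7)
The plan is to mirror the proof of the non-weak version (Theorem for \textbf{RPGA(co)}), making the two necessary changes: replacing the fixed parameter $\mu$ by the step-dependent $\mu_k$ in the definition of $\lambda_k$, and propagating the extra factor $\ell_k$ that appears in the weak greedy selection rule through the lower bound on $|\langle E'(x_{k-1}),\varphi_{j_k}\rangle|$.

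First, I would fix $k\geq 2$ and verify that the step is legal, i.e.\ $\|\hat x_k-x_{k-1}\|\leq M$: this follows immediately from the definition of $\lambda_k$, the bound $\|E'(x_{k-1})\|\leq M_0$ from \textbf{Condition 0}, and the assumption $\mu_k>\max\{1,\alpha^{-1}M_0M^{1-q}\}$, exactly as in the strong case. The \textbf{US} condition then gives
$$
E(\hat x_k)\leq E(x_{k-1})-\lambda_k\langle E'(x_{k-1}),\varphi_{j_k}\rangle+\alpha|\lambda_k|^q,
$$
and plugging in the definition of $\lambda_k$ (with $\mu_k$ in place of $\mu$) yields
$$
E(x_k)\leq E(\hat x_k)\leq E(x_{k-1})-\frac{\mu_k-1}{\mu_k}(\alpha\mu_k)^{-\frac{1}{q-1}}|\langle E'(x_{k-1}),\varphi_{j_k}\rangle|^{\frac{q}{q-1}},
$$
since $t_m$ is chosen by line search.

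Next, I would bound $|\langle E'(x_{k-1}),\varphi_{j_k}\rangle|$ from below. Repeating the $\varepsilon$-representation argument for $\bar x$ together with Lemma \ref{lm0} (so that $\langle E'(x_{k-1}),x_{k-1}\rangle=0$) and Lemma \ref{OO}, the only difference from the strong case is that the weak selection of $\varphi_{j_k}$ supplies an extra factor $\ell_k^{-1}$: one obtains
$$
e_{k-1}\leq \langle E'(x_{k-1}),x_{k-1}-\bar x\rangle\leq \ell_k^{-1}|\langle E'(x_{k-1}),\varphi_{j_k}\rangle|\,\|\bar x\|_1.
$$
Substituting this into the previous display and subtracting $E(\bar x)$ gives the recursion
$$
e_k\leq e_{k-1}\left(1-\frac{\mu_k-1}{\mu_k}(\alpha\mu_k)^{-\frac{1}{q-1}}\ell_k^{\frac{q}{q-1}}\|\bar x\|_1^{-\frac{q}{q-1}}e_{k-1}^{\frac{1}{q-1}}\right).
$$

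Finally, I would invoke Lemma \ref{lmseq} with $\ell=\frac{1}{q-1}$, $B=E(0)-E(\bar x)$,
$$
r=(\alpha\|\bar x\|_1^q)^{\frac{1}{q-1}},\qquad r_k=(\mu_k-1)\left(\frac{\ell_k}{\mu_k}\right)^{\frac{q}{q-1}},
$$
so that the prefactor $r_k/r$ matches the coefficient above. The resulting bound $a_m\leq \max\{1,\ell^{-1/\ell}\}r^{1/\ell}(rB^{-\ell}+\sum r_k)^{-1/\ell}$ becomes, after simplifying the exponents, exactly the claimed inequality, with $C_1=(\alpha\|\bar x\|_1^q/(E(0)-E(\bar x)))^{1/(q-1)}$ absorbing the initial-error term. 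I do not expect any real obstacle: the arithmetic of matching $r$, $r_k$, and $\ell$ to the form of the conclusion is the only delicate step, and is a straightforward bookkeeping exercise once the recursion is in hand.
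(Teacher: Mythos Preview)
Your proposal is correct and follows essentially the same argument as the paper: you establish the same one-step decrease estimate via the \textbf{US} condition with $\mu_k$, derive the same lower bound on $|\langle E'(x_{k-1}),\varphi_{j_k}\rangle|$ with the extra $\ell_k$ factor from the weak selection, obtain the identical recursion for $e_k$, and apply Lemma~\ref{lmseq} with exactly the same choices $r=(\alpha\|\bar x\|_1^q)^{1/(q-1)}$, $r_k=(\mu_k-1)(\ell_k/\mu_k)^{q/(q-1)}$, $\ell=1/(q-1)$, $B=E(0)-E(\bar x)$. Your identification of $C_1$ also matches the paper's final bound.
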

 \begin{proof}
 Similarly to the proof of Theorem \ref{wdga}, we have that 
 $e_1\leq E(0)-E(\bar x)$ and for $k\geq 2$, 
  \begin{equation}
 \label{wol}
 \,E(x_k)\leq 
 E(x_{k-1})-\frac{\mu_k-1}{\mu_k}\left(\alpha
 \mu_k\right)^{-\frac{1}{q-1}}\abs{\lan
 E'(x_{k-1}),\varphi_{j_k}\ran}^{\frac{q}{q-1}}.
 \end{equation}
The same way one can easily derive that
 $$
 \|\bar x\|_1^{-1}\ell_ke_{k-1}\leq |\lan
 E'(x_{k-1}),\varphi_{j_k}\ran|,
 $$
 and thus the estimate
  $$
 e_k\leq e_{k-1}\left (1-\frac{\mu_k-1}{\mu_k}\left(\alpha
 \mu_k\right)^{-\frac{1}{q-1}}\ell_k^{\frac{q}{q-1}}\|\bar x\|_1^{-\frac{q}{q-1}}e_{k-1}^{\frac{1}{q-1}}\right ).
 $$
Now we apply    Lemma \ref{lmseq} for  the sequence of errors
 $\{e_k\}_{k=1}^\infty$ and
 $$
r_k=(\mu_k-1)\left (\frac{\ell_k}{\mu_k}\right)^{\frac{q}{q-1}}, \quad \ell=\frac{1}{q-1}>0, \quad B=E(0)-E(\bar x), \quad
 r=\left(\alpha \|\bar x\|_1^q\right)^{\frac{1}{q-1}},
 $$
and derive that
 $$
 e_k\leq \alphaÊ\|\bar x\|_1^q\left (
 \left (\frac{\alpha\|\bar x\|_1^q}{E(0)-E(\bar x)}\right )^{\frac{1}{q-1}}+\sum_{j=2}^k(\mu_j-1)\left (\frac{\ell_j}{\mu_j}\right)^{\frac{q}{q-1}}\right )^{1-q},
 $$
 and the proof is completed.
  \end{proof}

 \vskip .1in
 Ê 
 Ê \noindent
 Guergana Petrova\\
 Department of Mathematics, Texas A\&M University,
 College Station, TX 77843, USA\\
 Ê gpetrova@math.tamu.edu\\
 \noindent
 Zheming Gao\\
 Department of Mathematics, Texas A\&M University,
 College Station, TX 77843, USA\\
 Êgorgeous1992@tamu.edu\\
 \end{document}